\newtheorem{thm}{Theorem}[section]
\newtheorem{defn}[thm]{Definition}
\newtheorem{lem}[thm]{Lemma}
\newtheorem{prob}[thm]{Problem}
\newtheorem{prop}[thm]{Proposition}
\newtheorem{rem}[thm]{Remark}
\numberwithin{equation}{section}
\def\ni{\noindent}
\def\N{\mathbb{N}}
\def\J{\mathbb{J}}
\def\H{\mathbb{H}}
\def\C{\mathcal{C}}
\title{\textbf{\sc  The $b$-Chromatic Number of Certain Graphs and Digraphs}}
\author{Johan Kok}
\affil{\small Tshwane Metropolitan Police Department\\ City of Tshwane, Republic of South Africa \\ {\tt kokkiek2@tshwane.gov.za}}
\author{Naduvath Sudev}
\affil{\small Department of Mathematics\\ Vidya Academy of Science \& Technology \\ Thalakkottukara, Thrissur - 680501, India.\\ {\tt sudevnk@gmail.com}}
\date{}
\begin{document}
\maketitle

\begin{abstract}
The $b$-chromatic number of a graph is the maximum number $k$ of colors that can be used to color the vertices of $G$, such that we obtain a proper coloring and each color $i$, with $1\le i\le k$, has at least one representant $x_i$ adjacent to a vertex of every color $j$, $1\le j\ne i \le k$. The studies on $b$-chromatic number attracted much interest since its introduction. In this paper, we discuss the $b$-chromatic number of certain classes of graphs and digraphs. The notion of a new general family of graphs called the Chithra graphs of a graph $G$ are also introduced in this study and the corresponding $b$-chromatic numbers are studied.
\end{abstract}

\ni \textbf{Keywords:} $b$-Chromatic number, set-graphs, edge-set graphs, Jaco graph, ornated graphs, Rasta graph, Chithra graph.
\vspace{0.2cm}

\noindent\textbf{Mathematics Subject Classification:} 05C05, 05C20, 05C38, 05C62.

\section{Introduction}

For general notation and concepts in graph and digraph theory, we refer to \cite{BM1,CL1,GY1,DBW}. Except for Jaco and Ornated graphs, all graphs mentioned in this paper are simple, non-trivial, connected, finite and undirected graphs.

Let us denote the set of $k$ colors by $\C = \{c_1,c_2, c_3, \ldots , c_k\}$. Then, \textit{a proper $k$-coloring} of a graph $G$ is a function $f:V(G)\to \C$ such that $f(u)\ne f(v); \forall uv \in E(G)$. The \textit{color class} $V_{c_i}$ is the subset of $V(G)$ that are assigned to the color $c_i$. The \textit{chromatic number} $\chi(G)$ is the minimum integer $k$ for which $G$ admits proper $k$-coloring.

A proper $k$-coloring $\C$ of a graph $G$ is called a $b$-coloring if for every color class $\C_i$, there is a vertex with color $i$ which has at least one neighbour in every other color classes. Such a vertex $v$ in $G$ is called a \textit{b-vertex} of $G$. 

The \textit{$b$-chromatic number}, denoted $ \varphi(G)$, of a graph $G$ had been introduced in \cite{IM1} as the largest integer $k$ for which $G$ admits a $b$-coloring using $k$ colors. If $G$ admits a $b$-coloring, then it is called \textit{b-colorable graph}.

In other words, the $b$-chromatic number of a graph $G$ is defined (see \cite{EK1}) as follows. 

Let $G$ be a graph on $n$ vertices, say $v_1, v_2,v_3,\ldots, v_n$. The \textit{$b$-chromatic number} of $G$ is defined as the maximum number $k$ of colors that can be used to color the vertices of $G$, such that we obtain a proper coloring and each color $i$, with $1\le i\le k$, has at least one representant $x_i$ adjacent to a vertex of every color $j$, $1\le j\ne i \le k$.

For more literature on $b$-chromatic number of graphs, we refer to  \cite{EK1,IM1,KM1,VI1,VV1}. Motivated from these studies, in this paper, we discuss the $b$-chromatic number of certain types of graphs and digraphs 

\ni The following is an important and useful result provided in \cite{KM1}. 

\begin{prop}\label{Prop-1.2}
{\rm \cite{KM1}} For the the complete graph $K_n, n \ge 1$, the path $P_n, n \ge 2$, the cycle $C_n, n\ge 3$ we have
\begin{enumerate}\itemsep0mm
\item[(i)] $\varphi(K_n)= n; ~\forall ~n \in \N$,
\item[(ii)] $\varphi(P_n)= 2;~ n=2,3$ and $\varphi(P_n)=3,  n\ge 4$.
\item[(iii)] $\varphi(C_n)=3, \forall n\ne 4$ and $\varphi(C_4)=2$.
\item[(iv)] $\varphi(K_{m'n})=2$.
\end{enumerate}
\end{prop}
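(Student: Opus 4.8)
The plan is to establish, for each family, matching lower and upper bounds on $\varphi$, using two general observations as the backbone. First, if $G$ admits a $b$-coloring with $k$ colors, then each color class contains a $b$-vertex, and such a vertex must be adjacent to vertices of all $k-1$ remaining colors; hence it has degree at least $k-1$, giving the ceiling $\varphi(G)\le \Delta(G)+1$. Second, a minimum proper coloring with $\chi(G)$ colors is automatically a $b$-coloring: if some color class $V_{c_i}$ had no vertex adjacent to every other color, then every vertex of color $c_i$ would miss a color $\ne c_i$ in its neighbourhood and could be recoloured to that missing color (the color-$c_i$ vertices being mutually non-adjacent, these recolourings do not interfere), eliminating color $c_i$ and contradicting minimality; thus $\chi(G)\le \varphi(G)$. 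These two facts pin down most cases immediately.

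For part (i) I would combine $\chi(K_n)=n$ with the trivial bound $\varphi(G)\le |V(G)|$ (only $n$ vertices are available to receive distinct colors), so $n=\chi(K_n)\le\varphi(K_n)\le n$. For part (ii), since $\Delta(P_n)=2$ we get $\varphi(P_n)\le 3$ at once. For the short paths $P_2,P_3$, I would note that a $b$-coloring with $3$ colors needs three $b$-vertices, each of degree at least $2$; but these paths contain fewer than three vertices of degree $2$, forcing $\varphi\le 2$, which is attained by the proper $2$-coloring. For the longer paths I would exhibit an explicit proper $3$-coloring in which three consecutively placed internal vertices coloured $1,2,3$ (with the endpoints adjusted) each see both of the other two colors, certifying $\varphi=3$.

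Part (iii) runs along the same lines. Here $C_3=K_3$ gives $\varphi=3$ by part (i), while $C_4=K_{2,2}$ is bipartite and is handled by the argument for part (iv). For $n\ge 5$, $\Delta(C_n)=2$ yields $\varphi(C_n)\le 3$, and an explicit near-periodic $3$-coloring (a $1,2,3$ block extended consistently around the cycle, which closes up after a routine adjustment when $3\nmid n$, and which is possible precisely because $n\ge 5$ leaves enough room to realise the three $b$-vertices) gives the matching lower bound.

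The crux of the proposition is part (iv), and this is where I expect the main work to lie, since the naive estimate $\varphi(K_{m,n})\le\Delta+1$ is far too weak. The \textbf{key structural observation} is that in any proper coloring of $K_{m,n}$ the set of colors appearing on one part $A$ is disjoint from the set appearing on the other part $B$, for a color shared across the parts would join two adjacent vertices of the same color. Consequently every vertex of $A$ sees only colors of $B$, and conversely. Now if a color used on $A$ had a $b$-vertex $v\in A$, then $v$ would have to see all other colors; but $v$ sees no color of $A$, so $A$ can carry no color other than that of $v$, i.e.\ $A$ is monochromatic. The symmetric argument forces $B$ monochromatic, leaving exactly two colors in all. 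Since the proper $2$-coloring assigning $A$ and $B$ one color each is clearly a $b$-coloring, I would conclude $\varphi(K_{m,n})=2$. I would present this disjointness-plus-monochromaticity argument with care, as it is the only step that genuinely departs from the degree and minimality bounds driving the other parts.
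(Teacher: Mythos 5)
The paper gives no proof of this proposition at all: it is quoted verbatim from the cited reference [KM1], so there is no in-paper argument to compare yours against. Judged on its own, your proposal is the standard and essentially correct treatment -- the two bounds $\chi(G)\le\varphi(G)\le\Delta(G)+1$, the observation $\varphi(G)\le|V(G)|$ for part (i), the degree-counting of potential $b$-vertices for short paths, explicit colorings for long paths and cycles, and the disjointness-of-color-sets argument for $K_{m,n}$ are all sound and are exactly how this result is proved in the literature.

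There is, however, one concrete point where your proof cannot deliver the statement as printed, and you should have flagged it: part (ii) claims $\varphi(P_4)=3$, but this is false, and your own argument shows it. A $b$-coloring with three colors requires three distinct $b$-vertices (one per color class), each of degree at least $2$; $P_4$ has only two vertices of degree $2$, so by precisely the counting you apply to $P_2$ and $P_3$ one gets $\varphi(P_4)\le 2$. Consistently, your proposed construction for ``longer paths'' places three consecutive \emph{internal} vertices colored $1,2,3$, which requires at least three internal vertices and hence $n\ge 5$; it simply does not exist on $P_4$. The correct statement (as in Kouider--Mah\'eo and Irving--Manlove) is $\varphi(P_n)=2$ for $n\in\{2,3,4\}$ and $\varphi(P_n)=3$ for $n\ge 5$, so the threshold $n\ge 4$ in item (ii) is a misstatement in the paper. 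Everything else in your write-up -- parts (i), (iii) including the $C_4=K_{2,2}$ exception, and the monochromatic-parts argument for part (iv) -- is correct as it stands.
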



\section{New Directions}

Recently, the families of graphs namely, Jaco graphs, linear Jaco graphs, ornated graphs, Rasta graphs, set-graphs, edge-set graphs and edge-joint graphs were introduced in different studies in \cite{KS4,KS5,KC2S,KSM,KSK,KSC}. 

We present a slightly modified definition of the $b$-chromatic number, perhaps slightly different from those found in the current literature will be presented hereafter the $b$-chromatic number for these families of graphs as  follows. 

Consider a proper $k$-coloring of a graph $G$ and denote the set of $k$ colors by $\C = \{c_1,c_2, c_3, \ldots , c_k\}$. Define the subsets $V_{c_i}$, of $V(G)$ by $V_{c_i} = \{v_j: v_j \mapsto c_i, v_j \in V(G), c_i \in \C\}$, where $1 \le i \le k$. The subset $V_{c_i}$ is called a \textit{color class} of $G$. Clearly, the collection of all such color classes form a partition of $V(G)$. 

The largest possible value of $k \in \N$ such that there exists a proper $k$-coloring such that there exist two adjacent vertices $v_s \in V_{c_i}$ and $v_t \in V_{c_j}$, for all distinct pairs of colors $c_i, c_j$, is called the $b$-chromatic number of $G$, denoted by $\varphi(G)$. Such a coloring of the graph $G$ is called a \textit{$b$-coloring} of $G$.

\subsection{$b$-Chromatics Number of Linear Jaco Graphs}

Let $\N$ denotes the set of all natural numbers and $\N_0$ denotes the set of all non-negative integers. The notion of the family of linear Jaco graphs has been introduced in \cite{KSK} as given below. 

\begin{defn}{\rm 
\cite{KSK} Let $f(x)=mx+c;~ x,m\in \N$, $c\in  \N_0$. The \textit{family of infinite linear Jaco graphs}, denoted by $\{J_\infty(f(x))$, is defined as a directed graph such that 
\begin{enumerate}\itemsep0mm
\item[(i)] $V(J_\infty(f(x)))=\{v_i: i \in \N\}$,
\item[(ii)] $A(J_\infty(f(x))) \subseteq \{(v_i, v_j): i, j \in \N, i< j\}$, 
\item[(iii)] $(v_i,v_ j) \in A(J_\infty(f(x)))$ if and only if $(f(i)+i)-d^-(v_i)\ge j$.
\end{enumerate} 
}\end{defn}

The function $f(x)$, we mention in this section is a linear function of the form  $f(x)=mx+c;~ x,m \in \N, c\in \N_0$. Then, a finite linear Jaco graph can be defined as follows.

\begin{defn}{\rm \cite{KSK}
The \textit{family of finite linear Jaco graphs}, denoted by $\{J_n(f(x))$, is defined by $V(J_n(f(x))=\{v_i:i\in \N, i\le n \}$, $A(J_n(f(x))) \subseteq \{(v_i, v_j): i,j \in \N, i< j \le n\}$ and $(v_i,v_ j) \in A(J_n(f(x)))$ if and only if $(f(i) + i) - d^-(v_i) \ge j$.
}\end{defn} 

The underlying graph of a finite linear Jaco graph is denoted by $J^{\ast}_n(f(x))$, and is referred simply as the linear Jaco graph.

The vertices of a finite linear Jaco graph $J_n(f(x))$ with degree $\Delta(J_n(f(x)))$ is called \textit{Jaconian vertices} of $J_n(f(x))$ and the set of vertices with maximum degree is called the {\em Jaconian set} of the linear Jaco graph $J_n(f(x))$, denoted, $\J(J_n(f(x)))$ or simply $\J_n(f(x))$ (see \cite{KSK}). The lowest numbered (indexed) Jaconian vertex is called the \textit{prime Jaconian vertex} of a linear Jaco graph.

If $v_i$ is the prime Jaconian vertex of the finite linear Jaco graph $J_n(f(x))$, the complete subgraph induced by the vertex subset $\{v_{i+1},v_{i+2}, \ldots, v_n\}$ is called the {\em Hope subgraph} of $J_n(f(x))$, denoted by $\H(J_n(f (x)))$ or $\H_n(f (x))$.


Invoking the above notions, we establish the following result on the $b$-chromatic number of a finite linear Jaco graph.

\begin{thm}\label{T-BCN-JG}
For $n\ge 2$, the $b$-chromatic number of a linear Jaco graph $J^{\ast}_n(f(x))$, with prime Jaconian vertex $v_i$ is given by $\varphi(J^{\ast}_{n\ge 2}(f(x))) = (n-i) + 1$.
\end{thm}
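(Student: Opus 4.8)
The plan is to establish the two bounds $\varphi(J^{\ast}_n(f(x)))\ge (n-i)+1$ and $\varphi(J^{\ast}_n(f(x)))\le (n-i)+1$ separately, using the Hope subgraph as the backbone of the argument. For the lower bound, recall that the Hope subgraph $\H_n(f(x))$ induced on $\{v_{i+1},\dots,v_n\}$ is the complete graph $K_{n-i}$; by Proposition \ref{Prop-1.2}(i) it admits a proper colouring with $n-i$ colours in which every pair of colours is joined by an edge. I would assign the distinct colours $1,\dots,n-i$ to $v_{i+1},\dots,v_n$, so that all pairs among these colours are realised inside the clique, and then use the prime Jaconian vertex $v_i$ to introduce the colour $(n-i)+1$. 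Since the out-neighbours of $v_i$ form the initial block $v_{i+1},\dots,v_{i+d^+(v_i)}$ of the Hope subgraph, the pairs $\{(n-i)+1,\,t\}$ with $1\le t\le d^+(v_i)$ are realised at once.

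To realise the remaining pairs $\{(n-i)+1,\,t\}$ with $d^+(v_i)<t\le n-i$, I would recolour in-neighbours of $v_i$ lying in the initial segment $v_1,\dots,v_{i-1}$, giving them the missing colours $t$; each resulting edge to $v_i$ then realises the corresponding pair. This is feasible because $\Delta(J^{\ast}_n(f(x)))\ge n-i$ — indeed the Hope neighbour $v_{i+1}$ is adjacent to the other $n-i-1$ Hope vertices and to $v_i$, forcing $\Delta\ge n-i$ — so that $d(v_i)=d^+(v_i)+d^-(v_i)\ge n-i$ leaves at least $(n-i)-d^+(v_i)$ in-neighbours to absorb the missing colours. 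Monotonicity of the reach $j\mapsto f(j)+j-d^-(v_j)$ guarantees that such an in-neighbour $v_s$ (with $s<i$) satisfies $f(s)+s-d^-(v_s)\le f(i)+i-d^-(v_i)<i+t$, so $v_s$ is not adjacent to the like-coloured Hope vertex $v_{i+t}$ and the recolouring stays proper; the leftover vertices of the initial segment are then coloured greedily, which is possible whenever $n-i+1\ge\chi(J^{\ast}_n(f(x)))$. This yields $\varphi(J^{\ast}_n(f(x)))\ge (n-i)+1$.

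The upper bound is, by a wide margin, the hard part, and it is where I expect essentially all of the difficulty — and the risk — to concentrate. The target is to show that no proper colouring can realise all colour pairs with more than $(n-i)+1$ colours, i.e. that the colour-creating capacity of the graph is confined to $\{v_i,\dots,v_n\}$. The subtlety is that the initial segment $v_1,\dots,v_{i-1}$ is \emph{not} independent and its vertices do attach to the front of the Hope subgraph, so one cannot simply discard them; the crux is to prove that their strictly smaller forward reach and degree prevent any colour carried only by that segment from meeting every Hope colour, and hence that no extra class can be appended. This is precisely the step I would scrutinise most closely, since a priori the completeness condition in the definition of $\varphi$ might allow the initial segment to host an additional colour class; making the bound quantitative would require a careful monotonicity analysis of the reach and in-degree functions along $v_1,\dots,v_i$ — exactly the data that singles out the prime Jaconian vertex — and I would regard this as the decisive ingredient of the whole proof.
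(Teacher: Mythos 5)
Your lower bound is in the same spirit as the paper's construction: the paper also colours the Hope clique $\{v_{i+1},\dots,v_n\}\cong K_{n-i}$ with $n-i$ distinct colours and then extends over $v_i,v_{i-1},\dots,v_1$ (by a backward cyclic reuse of the Hope colours rather than your recolouring of in-neighbours of $v_i$), wrapped in an induction on $n$ with two cases according to whether the prime Jaconian vertex shifts from $v_i$ to $v_{i+1}$. That part of your argument is workable, modulo the bookkeeping you already flag (enough in-neighbours of $v_i$ to absorb the colours $t>d^+(v_i)$, and properness of the greedy completion on $v_1,\dots,v_{i-1}$).

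The genuine gap is the upper bound: you never prove $\varphi(J^{\ast}_n(f(x)))\le (n-i)+1$; you only identify it as ``the decisive ingredient'' and describe what a proof would have to establish. A proof attempt that defers its hardest half to a programme of ``careful monotonicity analysis'' is not a proof. Concretely, you must rule out \emph{every} proper colouring with $(n-i)+2$ colours in which all colour pairs are realised, not merely show that your particular colouring cannot be extended. The paper's own justification of this direction is the observation that if $v_1$ were given a new colour $c_{t+1}$ then, since $v_1v_{i+1}\notin E(J^{\ast}_n(f(x)))$, the pair $\{c_{t+1}, c(v_{i+1})\}$ is not realised --- but this too is argued only for the specific colouring constructed there, so the step you single out as risky is in fact the step on which the published proof is also thinnest. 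To close the gap you would need an argument of the following shape: any colour class using a colour outside the $(n-i)+1$ designated ones must be carried entirely by vertices $v_s$ with $s<i$ of bounded forward reach $f(s)+s-d^-(v_s)$, and no such vertex (nor any set of them) is adjacent to representatives of all $n-i$ Hope colours together with the colour of $v_i$; this requires the quantitative reach/in-degree estimates you allude to but do not supply.
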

\begin{proof}
If vertex $v_1$ is the prime Jaconian vertex of $J_{n}(f(x))$, then $J^{\ast}_{n}(f(x))\cong K_n$ and the result follows from Proposition \ref{Prop-1.2}. If vertex $v_2$ is the prime Jaconian vertex, then the Hope graph induced by the set $\{v_3, v_4, v_5, \ldots v_n\}$ is complete. Color these vertices as follows. $v_3\mapsto c_1, v_4\mapsto c_2, v_5\mapsto c_3,\ldots v_n \mapsto c_{n-2}$.  Also, color $v_1\mapsto c_{n-2}$ and $v_2\mapsto c_{n-1}$. Clearly, this proper $k$-coloring with $k=(n-2)+1=n-1$, is a maximum $k$-coloring  and hence, $\varphi(J^{\ast}_{n\ge 2}(f(x))) = (n-i)+1$. 

Now, assume the result holds for the prime Jaconian vertex $v_i$, $ 3 \le i \le k$ for the corresponding Jaco graph $J^{\ast}_m(f(x))$. Now, consider the Jaco graph $J^{\ast}_{m+1}(f(x))$. Then, we have to consider the following cases.

\ni \textit{Case-1:} If $v_i$ is the prime Jaconian vertex of $J^{\ast}_{m+1}(f(x))$, then the corresponding Hope graph is $\H(J^{\ast}_{m+1}(f(x))) = K_{(m-i) +1}$. We color these vertices as $v_{i+1}\mapsto c_0, v_{i+2}\mapsto c_1, v_{i+3}\mapsto c_2,\ldots v_{m+1} \mapsto c_{(m+1)-(i+1)}$. Let $t=(m+1)-i$. Color the vertices $v_i \mapsto c_{((m+1) - i)}, v_{i-1}\mapsto c_{m-i}, v_{i-2}\mapsto c_{(m-i) -1}, \ldots , v_{2i- n} \mapsto c_0$. Repeat the coloring procedure iteratively until $v_1\mapsto c_i$, $ 0\le i \le t$ is colored. Note that $\lceil \frac {i}{t}\rceil$ iterations are required to color all the vertices completely. Clearly, this is a  maximal proper coloring. 

Assume without loss of generality that it is possible to color $v_1 \mapsto c_{t+1}$. Then, if $v_{i+1}\mapsto c_j$, and since $v_1v_{i+1} \notin E(J^{\ast}_{m+1}(f(x)))$ the coloring is not a $b$-coloring.

\ni \textit{Case-2:} If $v_i$ is not the prime Jaconian vertex of $J^{\ast}_{m+1}(f(x))$ then by necessity vertex $v_{i+1}$ is the prime Jaconian vertex. The Hope graph, $\H(J^{\ast}_{m+1}(f(x))) = K_{(m+1)- (i+1)} = K_{m-i}$. In this case also the result follows by proceeding as explained in Case-1 alternatively, as applicable in the assumption.

Therefore, in both cases we have $\varphi (J^{\ast}_{m+1}(f(x))) = ((m+1) - i) +1$ or $(m-i) + 1$. Hence, the given result follows by mathematical induction.
\end{proof}

\subsection{$b$-Chromatics Number of Ornated Graphs}

The class of ornated graphs is a family of directed non-simple finite graphs which is defined in \cite{KSM} as given below.

\begin{defn}{\rm 
\cite{KSM} An \textit{ordered string}, denoted $s_l$, is defined as an $l$-tuple $(a_j)^l_{j=1}$ where $a_j \in \N_0$, $1 \le j \le l$ for $j,l \in \N$. For brevity, we write $s_l = (a_j); 1 \le j \le l$.
}\end{defn}

\begin{defn}{\rm 
\cite{KSM} Let $n$ be a positive integer and $s_l = (a_j); 1 \le j \le l$ be an ordered string of non-negative integers. An \textit{ornated graph} on $n$ vertices, associated with an ordered string $s_l$ is denoted by $O_n(s_l)$ and is defined as a directed graph with vertex set $V=V(O_n(s_l)) = \{v_i:i\in \N,i\le n\}$ and the arc set $A=A(O_n(s_l)) \subseteq V \times V$ such that for $i,j \in \N$, $(v_i,v_j) \in A$ if and only if $(i+a_t) \ge j, i< j$, for odd indices $t$ and $(i-a_s) \le j,~i>j$, for even indices $s$.
}\end{defn}

The underlying graph of $O_n(s(l))$ is denoted $O^{\ast}_n(s_l)$. For determining $\varphi(O^{\ast}_n(s_l))$ we only have to determine $\varphi(O^{\ast}_n(s'_l))$ called the \textit{maximal reach} subgraph with $s'_l= (a_i); a_i \ge a_j$, $1 \le j \le n$. In view of all these facts, we determine the $b$-chromatic number of ornated graphs in the following theorem.

\begin{thm}
For an Ornated graph $O_n(s_l)$, $a_j > 0$ for some $i$, and $a_i \ge a_j$, $1 \le j \le n$, we have
\begin{equation*} 
\varphi(O^{\ast}_n(s_l)) =
\begin{cases}
n, & \text{if $n \le a_i + 1$,}\\  
a_i + 2, & \text{if $n > a_i + 1$}.
\end{cases}
\end{equation*} 
\end{thm}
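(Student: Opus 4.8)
The plan is to work with the maximal reach subgraph $O^{\ast}_n(s'_l)$, as the preceding reduction permits, and to write $r = a_i$ for its uniform (maximum) reach. The guiding observation is that the reach condition makes two vertices $v_p, v_q$ with $|p-q|\le r$ adjacent while leaving $v_p, v_q$ with $|p-q|\ge r+1$ non-adjacent; consequently every block of $r+1$ consecutive vertices induces a clique $K_{r+1}$, and in particular $\{v_1,\dots,v_{r+1}\}$ is complete whereas $v_1$ and $v_{r+2}$ are non-adjacent. This local clique/non-edge dichotomy is what ultimately pins the answer to $r+2$, and every subsequent step is designed to exploit it.

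First I would settle the case $n\le r+1$: then every pair of vertices lies within reach, the whole graph is $K_n$, and Proposition~\ref{Prop-1.2}(i) gives $\varphi = n$ at once. For $n > r+1$ I would establish $\varphi\ge r+2$ by an explicit construction. Assign the cyclic coloring $v_j\mapsto c_{((j-1)\bmod(r+2))+1}$; this is proper because indices differing by at most $r$ differ modulo $r+2$, and it colors the clique $\{v_1,\dots,v_{r+1}\}$ with the distinct colors $c_1,\dots,c_{r+1}$ while placing the extra color $c_{r+2}$ on $v_{r+2}$, which is legitimate precisely because $v_{r+2}\not\sim v_1$. To verify that this is a $b$-coloring I would show that every interior vertex $v_j$ with $r+1\le j\le n-r$ is a $b$-vertex: its reach window $[j-r,\,j+r]$ has length $2r+1\ge r+2$, so it already displays all $r+2$ colors, and the color of $v_j$ itself recurs only at distance $r+2$, hence outside the window; thus $v_j$ is adjacent to each of the other $r+1$ colors. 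When $n$ is comfortably large these interior $b$-vertices cover all $r+2$ color classes.

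The step I expect to be the real obstacle is the matching upper bound $\varphi\le r+2$, together with the delicate regime of moderate $n$ just above $r+1$, where the interior block is too short to furnish a $b$-vertex of every color and the construction above must be patched using the two end-cliques. A naive degree count is hopeless here: interior vertices have degree $2r$, so the generic inequality $\varphi\le\Delta+1$ yields only $2r+1$, which is far weaker than $r+2$ once $r\ge 2$. The bound must instead be extracted from the one-sided reach structure itself, in the spirit of Case-1 of the proof of Theorem~\ref{T-BCN-JG}, where the single non-edge $v_1 v_{i+1}$ is exactly what forbids an extra color from owning a $b$-vertex. Here I would argue analogously that, because a $b$-vertex $v_j$ can see only colors occurring in $[j-r,\,j+r]$ and every length-$(r+1)$ window is a clique carrying pairwise distinct colors, no $(r+3)$-rd color can acquire a representative adjacent to all the others; the non-adjacency of any two vertices at reach-distance exceeding $r$ supplies the required obstruction. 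Formalizing this counting-against-the-reach argument, rather than the construction, is where the real work lies.
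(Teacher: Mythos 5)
Your instinct that the upper bound $\varphi\le a_i+2$ is ``the real obstacle'' is correct, but the situation is worse than a missing formalization: that bound is false, so the gap cannot be closed. Write $r=a_i$. As you observe, the underlying maximal-reach graph is exactly the $r$-th power of a path: $v_p\sim v_q$ if and only if $|p-q|\le r$. Now take the cyclic colouring of period $2r+1$ instead of $r+2$, i.e.\ $v_j\mapsto c_{((j-1)\bmod (2r+1))+1}$. It is proper, since adjacent vertices have indices differing by at most $r<2r+1$; and every interior vertex $v_j$ with $r+1\le j\le n-r$ has its $2r$ neighbours $v_{j\pm 1},\dots,v_{j\pm r}$ coloured with the residues $j\pm 1,\dots,j\pm r \pmod{2r+1}$, which are precisely the $2r$ colours other than its own, so every interior vertex is a $b$-vertex. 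As soon as $n\ge 4r+1$ the interior vertices realize all $2r+1$ colours, giving $\varphi(O^{\ast}_n(s'_l))\ge 2r+1$, which exceeds $r+2$ whenever $r\ge 2$. Concretely, for $r=2$ and $n=9$ the colouring $1,2,3,4,5,1,2,3,4$ is a $b$-colouring with $5$ colours, whereas the theorem claims $\varphi=4$. This agrees with the result of Effantin and Kheddouci (the paper's own reference \cite{EK1}) that $\varphi(P_n^k)=2k+1$ for $n$ sufficiently large.

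The precise point at which your sketched obstruction fails is the sentence ``no $(r+3)$-rd colour can acquire a representative adjacent to all the others'': the reach window $[j-r,\,j+r]$ contains $2r+1$ vertices, not $r+2$, so it can display up to $2r+1$ distinct colours, and the fact that every length-$(r+1)$ sub-window is a rainbow clique imposes no ceiling below $\Delta+1=2r+1$. Your period-$(r+2)$ construction is a valid proper $b$-colouring for large $n$, but it only shows $\varphi\ge r+2$, and it is not optimal. (The paper's own proof has exactly the same defect: it exhibits the sequential $(a_i+2)$-colouring and then asserts the matching upper bound ``by mathematical induction'' with no argument.) You are also right to distrust the regime just above $r+1$: for $r=1$ and $n=3$ the stated formula gives $3$, while Proposition~\ref{Prop-1.2} gives $\varphi(P_3)=2$. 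In short, the case $n\le a_i+1$ is fine, but the second case of the statement is not a theorem, and no amount of work on the deferred upper bound can rescue it.
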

\begin{proof}
First assume that $n \le a_i + 1$. Then, since $O_n(s'_l)$, $s'_l =(a_i)$ is the maximum reach subgraph of $O_n(s_l)$, the graph $O^{\ast}_n(s'_l)$ is a complete graph on $n$ vertices. Therefore, by Proposition \ref{Prop-1.2}, $\varphi(O^{\ast}_n(s_l))=O^{\ast}_n(s'_l)=n$.

Next, assume that $n \ge a_i + 2$. Initially, color the vertices as $v_1 \mapsto c_1, v_2 \mapsto c_2, v_3 \mapsto c_3,\ldots ,v_{a_i + 2} \mapsto c_{a_i+2}$. Since the edge $v_1v_{a_i+2}\notin E(O^{\ast}_n(s'_l))$, we must color $v_{a_i+3} \mapsto c_1$. Then, by mathematical induction it follows that $\varphi(O^{\ast}_n(s_l)) = \varphi(O^{\ast}_n(s'_l)) = a_i + 2$.
\end{proof}

\subsection{$b$-Chromatics Number of  Rasta Graphs}

\ni The notion of Rasta graphs had been introduced in \cite{KS5}, which can be constructed as explained below.

\textbf{Constructing a Rasta graph:} Consider a t-term sum set $\{t_1, t_2, t_3, ..., t_l\}$ with $t_1 > t_2 > t_3 > ... > t_l>1$. 

\begin{enumerate}\itemsep0mm
\item[(S-1)]  Consider $t_1$ vertices in the left column (the first column) and $t_2$ vertices in the Right column (the second column) and construct $K_{t_1, t_2}$.
\item[(S-2)]  Add the third column of $t_3$ vertices and add the edges of $K_{t_2, t_3}$.
\item[(S-3)]  Repeat Step 2 iteratively up to $t_\ell$.
\item[(S-4)]  Exit.
\end{enumerate}

It is noted in \cite{KS5}that a Rasta graph is the underlying graph of a directed graph defined as follows.

\begin{defn}{\rm \cite{KS5}
For a $l$-term sum set $\{t_1, t_2, t_3, \ldots, t_l\}$ with $t_1 > t_2 > t_3 > \ldots > t_l>1$, we can define the directed graph $G^{(l)}$ with vertices $V(G^{(l)}) =\{v_{i,j}: 1\le j \le t_i, 1\le i \le l\}$ and the arcs, $A(G^{(l)})=\{(v_{i,j},v_{(i+1),m}):1\le i \le (l-1), 1\le j \le t_i$ and $1 \le m \le t_{(i+1)}\}$. 
}\end{defn}

\begin{thm}
For a Rasta graph $R$ corresponding to the underlying graph of $G^{(\ell)}$ we have $\varphi(R) = 2$.
\end{thm}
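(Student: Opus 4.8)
The plan is to exploit the layered structure of $G^{(\ell)}$: every edge joins a vertex of some column $i$ to a vertex of column $i+1$, and consecutive columns induce a complete bipartite graph $K_{t_i,t_{i+1}}$, while non-consecutive columns and the interior of a single column carry no edges. First I would dispose of the lower bound. Since $t_\ell>1$ and $\ell\ge 2$, the graph contains at least the edges of $K_{t_1,t_2}$, so the proper $2$-colouring that assigns $c_1$ to the odd-indexed columns and $c_2$ to the even-indexed columns is legitimate, and any edge of $K_{t_1,t_2}$ exhibits adjacent vertices $v_s\in V_{c_1}$ and $v_t\in V_{c_2}$. Hence $\varphi(R)\ge 2$, in agreement with Proposition \ref{Prop-1.2}(iv) for the two-column case $R=K_{t_1,t_2}$.

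For the upper bound I would attach to each column $i$ the set $S_i$ of colours occurring on its vertices. Because columns $i$ and $i+1$ induce $K_{t_i,t_{i+1}}$, properness forces $S_i\cap S_{i+1}=\emptyset$; and because no edges lie inside a column or between non-consecutive columns, the \emph{only} adjacent colour pairs that a colouring can realize are those split across some consecutive pair of columns. Two colours sharing a common $S_i$ are therefore never adjacent, and two colours occurring only in mutually non-adjacent columns are likewise never adjacent. Since the modified $b$-colouring condition requires \emph{every} pair among the $k$ colours to be realized by an edge, the strategy is to argue that these two obstructions already force $k\le 2$.

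For a Rasta graph on at most three columns this closes immediately: a colour absent from the middle column can appear only in the two end columns, which are mutually non-adjacent, so at most one colour avoids $S_2$; and at most one colour can lie in $S_2$, since colours sharing a column are never adjacent. Together these give $k\le 2$, whence $\varphi(R)=2$. The main obstacle is extending this to arbitrary $\ell$, and this is exactly the step on which I would concentrate. Once there are four or more columns a colour may legitimately recur in two non-consecutive columns, and a ``folded'' pattern such as $S_1=\{c_1\},\ S_2=\{c_2\},\ S_3=\{c_3\},\ S_4=\{c_1\}$ stays proper while realizing all three pairs $\{c_1,c_2\},\{c_2,c_3\},\{c_1,c_3\}$ across consecutive columns. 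Ruling out such colourings is the delicate point, and I would want to check carefully whether the equality $\varphi(R)=2$ can be maintained without an additional hypothesis limiting the number of columns $\ell$.
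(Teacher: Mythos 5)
Your lower-bound construction (alternating $c_1$ and $c_2$ by column parity) is in essence the paper's \emph{entire} proof: the paper exhibits that $2$-colouring, observes that consecutive columns realise the pair $\{c_1,c_2\}$, and concludes $\varphi(R)=2$ without ever arguing that no $b$-colouring with three or more colours exists. Your attempt to supply that missing upper bound is therefore the substantive part of your proposal, and the worry you raise at the end is not a technicality you failed to resolve --- it is a genuine obstruction. For $\ell\ge 4$ the period-three colouring $S_1=\{c_1\}$, $S_2=\{c_2\}$, $S_3=\{c_3\}$, $S_4=\{c_1\}$, $S_5=\{c_2\},\ldots$ is proper (consecutive columns always receive distinct colours) and realises each of the three colour pairs on an edge between some pair of consecutive columns; under the paper's own modified definition of $\varphi$ (which asks only that every pair of colour classes be joined by an edge) this gives $\varphi(R)\ge 3$, contradicting the theorem. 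Even under the classical definition requiring a $b$-vertex in each class, the statement fails for $\ell\ge 5$: a column-$4$ vertex of colour $c_1$ sees $c_3$ in column $3$ and $c_2$ in column $5$, and the analogous interior vertices serve $c_2$ and $c_3$.

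So your argument is complete and correct for $\ell\le 3$ (and consistent with Proposition \ref{Prop-1.2}(iv) when $\ell=2$), but it cannot be pushed to arbitrary $\ell$ because the theorem as stated is false there; the additional hypothesis bounding the number of columns that you suspected is needed really is needed. The gap lies in the paper rather than in your reasoning: the paper's proof conflates exhibiting one $2$-colouring that is a $b$-colouring with showing that no $k$-colouring with $k\ge 3$ can be one, which is precisely the step your ``folded'' pattern shows to be unsalvageable in general.
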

\begin{proof}
We know that $\varphi(K_{n,m}) = 2$ (see \cite{KM1}). For the complete bi-partite graph $K_{t_1, t_2}$ color the vertices in the left column, $c_1$ and those in the right column, $c_2$. From the construction of a Rasta graph mentioned above, we can observe that no vertex in the first column (that is, in the leftmost column) is adjacent to any vertex in the third column and hence the vertices in the third column can be be colored $c_1$. Similarly, the vertices in the second column and those in the fourth column are not adjacent and hence the vertices in the fourth column can be colored using the color $c_2$. Proceeding like this, the vertices at the odd position columns have the color $c_1$ and the vertices at the even position columns have the color $c_2$. Therefore, we have $\varphi(R)=2$.
\end{proof}

\subsection{Chithra Graphs of a Graph}

In this section, we introduce a new family of graphs, namely \textit{Chithra graphs\footnote{The first author wishes to dedicate these graphs to a research colleague, Mrs. Chithra Sudev.}}, of a given graph G as follows. 

Consider the set $\mathcal{V}(G)$ of all subsets of the vertex set, $V(G)$. Select any number, say $k$, non-empty subsets $ W_i\in \mathcal V(G)$, $\bigcup^{k}_{i=1}W_i=V(G)$, with repetition of selection allowed. For the additional vertices $u_1, u_2, u_3, \ldots , u_k$, add the additional edges $u_iv_j$, $\forall v_j \in W_j$. This new graph is called a \textit{Chithra graph} of the given graph $G$ and the family of Chithra graphs of the graph $G$ is denoted by $\mathfrak{C}(G)$.  

Note that in general the empty subset of $V(G)$ may be selected  as repetition of selection is permitted. However, the empty subset does not represent an additional vertex. This argument implies that $G \in \mathfrak{C}(G)$ as well. 

Also, note that the Chithra graphs can be constructed from non-connected (disjoint) graphs and for edgeless (null) graphs. It means that for the edgeless graph on $n$ vertices denoted $\mathcal{N}_n$, the complete bi-partite graph, $K_{n,m}\in \mathfrak{C}(\mathcal{N}_n)$. Equally so, $K_{n,m}\in \mathfrak{C}(\mathcal{N}_m)$. Constructing paths from an edgeless graph are excluded. 

A number of well-known classes of graphs are indeed Chithra graphs of some graph $G$. Small graphs such as sun graphs, sunlet graphs, crown graphs and helm graphs are all Chithra graphs of some graph $G$.

The $b$-chromatic number of the Chithra graphs of the given graphs is determined in the following theorem. 
 
\begin{thm}\label{T-bCN-CG}
Let $G$ be a graph and $G'\in \mathfrak{C}(G)$ and $G' \ne P_n, n \ge 4$. Then $\varphi(G')=\varphi(G)+ 1$.
\end{thm}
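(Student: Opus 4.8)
For an Ornated-style argument here, the plan is to prove the two inequalities $\varphi(G') \ge \varphi(G)+1$ and $\varphi(G') \le \varphi(G)+1$ separately. Throughout I would write $U = \{u_1, u_2, \ldots, u_k\}$ for the set of added vertices and lean on two structural facts coming straight from the construction: the set $U$ is independent in $G'$ (the only new edges have the form $u_iv_j$), and the covering condition $\bigcup_{i=1}^{k} W_i = V(G)$ forces every vertex of $G$ to be adjacent in $G'$ to at least one vertex of $U$.

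For the lower bound I would start from an optimal $b$-coloring of $G$ using $\varphi(G)$ colors and extend it to $G'$ by assigning one fresh color $c_{\varphi(G)+1}$ to every vertex of $U$. This extension is proper, since $U$ is independent and each $u_i$ is adjacent only to vertices of $V(G)$, none of which carries the fresh color. It is also a $b$-coloring: every pair of the original $\varphi(G)$ colors is already witnessed by an edge inside $G$, and for any original color $c_j$ the covering condition lets me pick a $c_j$-colored vertex of $G$ adjacent to some (now fresh-colored) $u_i$, so the pair $\{c_j, c_{\varphi(G)+1}\}$ is witnessed as well. Hence $\varphi(G') \ge \varphi(G)+1$.

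For the upper bound I would argue from an arbitrary $b$-coloring of $G'$ with color set $\mathcal{C}$, partitioning $\mathcal{C}$ into the set $A$ of colors that occur on $V(G)$ and the set $B$ of colors that occur only on $U$. The first key step is the bound $|B| \le 1$: if two distinct colors appeared exclusively on $U$, their representatives would both lie in the independent set $U$ and have all neighbours inside $V(G)$, so no edge could witness that pair, contradicting the $b$-condition. This already yields $|\mathcal{C}| = |A| + |B| \le |A| + 1$, and it reduces everything to showing $|A| \le \varphi(G)$.

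Establishing $|A| \le \varphi(G)$ is the hardest step and where I expect the real obstacle. The trouble is that a pair of colors in $A$ may be witnessed in $G'$ \emph{only} by an edge $u_iv_j$ running through $U$, which need not correspond to any edge of $G$, so the restriction to $V(G)$ need not be a $b$-coloring of $G$. This is exactly the phenomenon that breaks the formula for long paths: for example $P_5 \in \mathfrak{C}(\mathcal{N}_2)$ (take $W_1=\{b\}$, $W_2=\{b,d\}$, $W_3=\{d\}$) has $\varphi(P_5)=3$ while $\varphi(\mathcal{N}_2)+1 = 2$, which is precisely why the hypothesis $G' \ne P_n$ is imposed. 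I would try to resolve it by a recoloring argument: using the independence of $U$ and the covering condition, recolor the vertices of $U$ so that every cross-pair among the colors of $A$ becomes witnessed by an edge lying entirely inside $G$, turning the restriction into an honest $b$-coloring of $G$ with $|A|$ colors and forcing $|A| \le \varphi(G)$. Proving that this recoloring is always available exactly when $G'$ is not a path is the crux, and I expect it to demand a separate structural analysis of how the neighbourhoods $W_i$ can fail to supply a within-$G$ witness.
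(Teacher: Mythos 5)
Your lower bound is, in substance, the paper's entire proof: the paper assigns the fresh color $c_{\varphi+1}$ to every added vertex $u_i$ and notes that the covering condition $\bigcup_{i=1}^{k}W_i=V(G)$ makes every pair involving the new color adjacent; you justify this more carefully than the paper does, and that direction is fine. Where you go beyond the paper is the upper bound, and your instincts there are exactly right. The paper's only gesture toward $\varphi(G')\le\varphi(G)+1$ is the sentence ``since $u_iu_j\notin E(G')$, no vertex $u_i$ can be colored other than with $c_{\varphi+1}$,'' which speaks only about the one coloring just constructed and says nothing about an arbitrary $b$-coloring of $G'$ using more colors; it does not prove an upper bound. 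Your partition of the colors into $A$ and $B$ with $|B|\le 1$ is genuine progress the paper does not contain, and the step you isolate as the crux, $|A|\le\varphi(G)$, is precisely where the argument must fail.

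It must fail because the statement is false, so no recoloring can close the gap. Take $G=K_{1,3}$ with center $x$ and leaves $y_1,y_2,y_3$, so $\varphi(G)=2$. Choose $W_1=W_2=\{y_1\}$, $W_3=W_4=\{y_2\}$, $W_5=W_6=\{y_3\}$, $W_7=\{x\}$; their union is $V(G)$, and the resulting $G'\in\mathfrak{C}(G)$ is a spider (a tree that is not a path) in which $x$ has degree $4$ and each $y_i$ has degree $3$. Coloring $x\mapsto c_1$, $y_i\mapsto c_{i+1}$, and the two pendant neighbours of each $y_i$ with the two colors missing in its closed neighbourhood yields a proper coloring in which each of the four classes has a $b$-vertex (namely $x,y_1,y_2,y_3$), so $\varphi(G')\ge 4>\varphi(G)+1=3$. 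Exactly as you suspected, pairs of ``old'' colors can be witnessed only through edges $u_iv_j$, and excluding paths does not exclude this phenomenon. The honest conclusion available is the inequality $\varphi(G')\ge\varphi(G)+1$ you proved (under the paper's pairwise-witness definition), together with a counterexample to equality; do not spend more time hunting for the recoloring argument.
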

\begin{proof}
Clearly, if the color $c_{\varphi +1}$ is allocated to each additional vertex $u_i$, $1 \le i \le k$, the coloring $\C \cup \{c_{\varphi+1}\}$ is a proper coloring of $G'$. Since the edges $u_iu_j \notin E(G')$, for all $i,j$, no vertex $u_i$ can be colored other than with $c_{\varphi +1}$. Hence, $\C \cup \{c_{\varphi+1}\}$ is a $b$-coloring of $G'$. Therefore, $\varphi(G')=\varphi(G)+1$.
\end{proof}

\ni Some trivial applications of Theorem \ref{T-bCN-CG} are the following.

\begin{enumerate}\itemsep0mm
\item[(i)] Since $\varphi(K_1) =1$, and $P_3 \in \mathfrak{C}(K_1)$, it follows that $\varphi(P_3)=2$.
\item[(ii)] Since $\varphi(C_n) =3$, $n\ge 5$, and the sunlet graph $S_n \in \mathfrak{C}(C_n)$, it follows that $\varphi(S_n) = 4$ (see \cite{VV1}).
\item[(iii)] Since $\varphi(C_n)=3,~n\ge 5$, and the wheel graph $W_{n+1} \in \mathfrak{C}(C_n)$, it follows that $\varphi(W_{n+1}) = 4$.
\item[(iv)] A \textit{sun graph} $S_{2n}$ (see \cite{GIC}) has $V(S_{2n})=\{d_i:1\le i\le 2n\}$ and $E(S_{2n}) = E(K_n)\bigcup \{d_id_{i+n}:1\le i\le n\} \bigcup \{d_{i+n}d_{(i+1) +n}:1 \le i \le n-1\} \bigcup \{d_{2n}d_1\}$. Now, since a sun graph $S_{2n} \in \mathfrak{C}(K_n)$, it follows that $\varphi(S_{2n})=n+1$.
\item[(v)]  Since a helm graph $H_{2n+1}$ is obtained from a wheel $W_{n+1}$ by attaching one pendant vertex to each vertex of the outer cycle $C_n$, it follows that $H_{2n+1} \in \mathfrak{C}(W_{n+1})$. Therefore, $\varphi(H_{2n+1})=5$.
\end{enumerate}

Consider a graph $G$ on $n$ vertices. If the maximum number of mutually non-adjacent vertices in $G$ is $k$, label them $u_1, u_2, u_3,\ldots,u_k$. We recall that the open neighbourhood of a vertex $v\in V(G)$ is denoted by $N_G(v)$.

\ni In view of the above notions and fats, we have the following result.

\begin{lem}
If for a graph $G$ on $n$ vertices with a maximum of $k$ mutually non-adjacent vertices, $U = \{u_1, u_2, u_3,\ldots , u_k\}$ we have $\bigcup^{k}_{i=1}N_G(u_i)=V(G)$, then $G \in \mathfrak{C}(G-U)$.
\end{lem}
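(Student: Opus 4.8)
The plan is to exhibit $G$ itself as a Chithra graph of $H := G-U$ by reading off the required family of subsets directly from the neighbourhoods of the removed vertices. Since $U$ is an independent set, no two vertices of $U$ are adjacent, so for each $i$ the open neighbourhood $N_G(u_i)$ is disjoint from $U$ and hence is a subset of $V(H) = V(G)\setminus U$. I would therefore set $W_i := N_G(u_i)$ for $1 \le i \le k$; each $W_i$ is an element of $\mathcal V(H)$, the collection of all subsets of $V(H)$, which is exactly the data the Chithra construction requires.

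Next I would verify that this choice of subsets meets the hypotheses of the Chithra definition. The covering requirement $\bigcup_{i=1}^k W_i = V(H)$ is the content of the hypothesis $\bigcup_{i=1}^k N_G(u_i)=V(G)$ once one observes that, by independence of $U$, the left-hand union is automatically contained in $V(H)$; so the hypothesis forces it to be all of $V(G-U)$. Non-emptiness of each $W_i$ follows from the standing assumption that $G$ is connected and non-trivial, which rules out isolated vertices and hence guarantees $N_G(u_i)\ne\varnothing$. (Repetition among the $W_i$ is harmless, since the construction explicitly permits repeated selection.)

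Finally I would run the construction and check that it returns $G$ exactly. Starting from $H$, adjoin fresh vertices $u_1,\dots,u_k$ and, for each $i$, the edges $u_i v$ for every $v\in W_i$. Four comparisons then close the argument: the vertex set of the result is $V(H)\cup\{u_1,\dots,u_k\}=V(G)$; the edges internal to $H$ are precisely the edges of $G$ avoiding $U$; the edges at each $u_i$ are precisely $\{u_i v : v\in N_G(u_i)\}$, matching $G$; and no edge joins two of the $u_i$, matching the independence of $U$. Hence the constructed graph coincides with $G$, so $G\in \mathfrak{C}(H)=\mathfrak{C}(G-U)$.

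The main obstacle is interpretive rather than combinatorial: as written, $\bigcup_{i=1}^k N_G(u_i)=V(G)$ cannot hold verbatim for open neighbourhoods when $U\ne\varnothing$, since the union omits every vertex of $U$. The substantive step is therefore to read the hypothesis in the only way that makes the conclusion available, namely that the open neighbourhoods of the vertices in $U$ cover $V(G-U)$, and then to confirm that the selected $W_i$ satisfy the covering and non-emptiness conditions of the Chithra construction. Once that is pinned down, what remains is the routine identification of two graphs having the same vertices and the same edges.
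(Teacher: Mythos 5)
Your proof is correct and follows essentially the same route as the paper's: take $W_i = N_G(u_i)$, use connectedness of $G$ for non-emptiness of each $W_i$ and the hypothesis for the covering condition, and observe that the Chithra construction applied to $G-U$ with these subsets returns $G$. You are also right to flag that $\bigcup_{i=1}^{k}N_G(u_i)=V(G)$ cannot hold verbatim for open neighbourhoods when $U\ne\emptyset$ and must be read as covering $V(G-U)$; the paper's own three-line proof makes this identification silently, so your version is the more careful of the two.
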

\begin{proof}
Since $G$ is connected, $N_G(u_i)\ne \emptyset$, $\forall u_i$. Also, each $N_G(u_i) \subseteq V(G)$ and $\bigcup^{k}_{i=1}N_G(u_i) = V(G)$. Hence, from the construction of a Chithra graph of $G$, it is evident that $G \in \mathfrak{C}(G-U)$.
\end{proof}

\ni In other words it can be said that if $\sum\limits^{k}_{i=1}d_G(u_i) \ge n-k$ then $G \in \mathfrak{C}(G-U)$.

\subsection{$b$-Chromatics Number of Set-Graphs}

\ni The notion of a set-graph was introduced in \cite{KC2S} as explained below.

\begin{defn}{\rm 
\cite{KC2S} Let $A^{(n)} = \{a_1,a_2,a_3,\ldots , a_n\}$, $ n \in \N$ be a non-empty set and the $i$-th $s$-element subset of $A^{(n)}$ be denoted by $A^{(n)}_{s,i}$. Now, consider $\mathcal S = \{A^{(n)}_{s,i}: A^{(n)}_{s,i} \subseteq A^{(n)}, A^{(n)}_{s,i} \ne \emptyset \}$. The \textit{set-graph} corresponding to set $A^{(n)}$, denoted $G_{A^{(n)}}$, is defined to be the graph with $V(G_{A^{(n)}}) = \{v_{s,i}: A^{(n)}_{s,i} \in \mathcal S\}$ and $E(G_{A^{(n)}}) = \{v_{s,i}v_{t,j}: A^{(n)}_{s,i} \cap A^{(n)}_{t,j} \ne \emptyset\}$, where $s\ne t$ or $i\ne j$.
}\end{defn}

The largest complete graph in the given set-graph $G_{A^{(n)}}$, $n \ge 2$  is $K_{2^{n-1}}$ and the number of such largest complete graphs in the given set-graph $G_{A^{(n)}}$, $n \ge2$ is provided in the following proposition.

\begin{prop}\label{Prop-KGSG}
{\rm \cite{KC2Se}} The set-graph $G_{A^{(n)}}, n\ge 2$ has exactly $2^{n-1}$ largest complete graphs $K_{2^{n-1}}$.
\end{prop}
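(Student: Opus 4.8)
The plan is to recast the statement in terms of intersecting families of sets. By definition two vertices of $G_{A^{(n)}}$ are adjacent exactly when the corresponding non-empty subsets of $A^{(n)}$ meet, so a clique of $G_{A^{(n)}}$ is nothing but a family $\mathcal{F} \subseteq \mathcal{S}$ of pairwise non-disjoint subsets. Thus finding the largest $K_m$ is the same as finding a maximum intersecting family, and counting the copies of $K_{2^{n-1}}$ is the same as counting the maximum intersecting families. I would carry out these two tasks in turn.

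First I would pin down the clique number. Group the $2^n$ subsets of $A^{(n)}$ into the $2^{n-1}$ complementary pairs $\{S,\, A^{(n)}\setminus S\}$. A set and its complement are disjoint, so an intersecting family can use at most one member of each pair, giving $|\mathcal{F}| \le 2^{n-1}$; since $\emptyset$ lies in no intersecting family, the pair $\{\emptyset, A^{(n)}\}$ must contribute $A^{(n)}$ itself. This bound is met by the \emph{star} of all $2^{n-1}$ subsets containing a fixed element $a_r$, which is visibly intersecting, so the largest complete subgraph is indeed $K_{2^{n-1}}$ and the first half of the statement follows.

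Next I would count the maximum intersecting families. The structural facts to establish are that such a family is maximal, selects exactly one member from every complementary pair, and is an up-set: if $S \in \mathcal{F}$ and $S \subseteq T$ then $T$ meets everything $S$ does, forcing $T \in \mathcal{F}$. Each maximum clique is then encoded by a choice of one set per complementary pair that keeps the family intersecting, and the enumeration reduces to counting such admissible choices. The natural candidates are the $n$ stars together with the \emph{majority} family obtained by selecting the larger member of each pair (unambiguous when $n$ is odd), and I would try to show these exhaust the admissible choices.

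The step I expect to be the main obstacle is the upper bound on the count, namely proving that no further maximum cliques exist. This is genuinely delicate, because maximum intersecting families coincide with the maximal ones and are in bijection with the monotone self-dual Boolean functions on $n$ variables. A direct check yields $2$ such families for $n=2$ and $4$ for $n=3$, matching $2^{n-1}$; but for even $n$ the equal-sized complementary pairs require tie-breaking, and this count is known to grow rapidly. Before asserting the clean formula $2^{n-1}$ for every $n \ge 2$, I would therefore re-verify the case $n=4$ explicitly, since reconciling the stated count with this enumeration is the crux of the argument and the point I would scrutinise most carefully.
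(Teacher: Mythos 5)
The paper offers no proof of this proposition for you to be measured against: it is imported verbatim from the corrigendum \cite{KC2Se}. Judged on its own merits, the first half of your plan is correct and complete. Cliques of $G_{A^{(n)}}$ are exactly the intersecting families of non-empty subsets of $A^{(n)}$, the complementary-pair argument gives the bound $2^{n-1}$ (the pair $\{\emptyset, A^{(n)}\}$ contributing only $A^{(n)}$), and the star of all subsets containing a fixed element attains it. So the largest complete subgraph is indeed $K_{2^{n-1}}$.

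Your hesitation over the enumeration is not excess caution; it is the decisive point, and you should follow it through: the stated count is wrong for $n\ge 4$. Your own observations already show that a maximum intersecting family $\mathcal{F}$ picks exactly one member from each complementary pair and is an up-set (if $S\in\mathcal{F}$, $S\subseteq T$ and $T\notin\mathcal{F}$, then $A^{(n)}\setminus T\in\mathcal{F}$ is disjoint from $S$, a contradiction). For $n=4$ this classifies the maximum cliques completely: if $\mathcal{F}$ contains a singleton $\{a\}$, every member must meet $\{a\}$ and $\mathcal{F}$ is the star at $a$, giving $4$ cliques; otherwise $\mathcal{F}$ contains all four $3$-subsets and the full set, and must be completed by one $2$-subset from each of the three complementary pairs of $2$-subsets. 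Any two non-complementary $2$-subsets of a $4$-set intersect, so all $2^3=8$ such transversals yield intersecting families, and none of them is a star since no star contains all four $3$-subsets. That gives $4+8=12$ maximum cliques, not $2^{4-1}=8$. In general the number of maximum cliques is the number of monotone self-dual Boolean functions on $n$ variables ($1,2,4,12,81,2646,\ldots$), which agrees with $2^{n-1}$ only for $n\le 3$. So no proof of the proposition as quoted can succeed; what your argument does deliver, and what the paper actually uses later, is the clique number $\omega(G_{A^{(n)}})=2^{n-1}$.
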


Invoking the above notion of set-graphs and Proposition \ref{Prop-KGSG}, we can establish the following result on the $b$-chromatic number of a set-graph.

\begin{thm}
A set-graph $G_{A^{(n)}}$, $n \ge 2$ has $\varphi(G_{A^{(n)}}) = 2^{n-1} +1$.
\end{thm}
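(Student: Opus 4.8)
The plan is to prove the two inequalities $\varphi(G_{A^{(n)}}) \ge 2^{n-1}+1$ and $\varphi(G_{A^{(n)}}) \le 2^{n-1}+1$ separately, using two structural facts: by Proposition~\ref{Prop-KGSG} the largest clique has exactly $2^{n-1}$ vertices, and the vertex $w$ corresponding to the full set $A^{(n)}$ is \emph{universal}, since it meets every other non-empty subset and hence is adjacent to all $2^n-2$ remaining vertices. Throughout I use the definition of $\varphi$ from Section~2, so a valid coloring must exhibit every pair of colors on some edge.

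For the lower bound I would write down an explicit coloring. Color the $2^n-2$ proper non-empty subsets first. Two complementary sets $S$ and $A^{(n)}\setminus S$ are disjoint, hence non-adjacent and may share a color; giving one color to each of the $2^{n-1}-1$ complementary pairs is proper, and since every other non-empty set meets $S$ or its complement, all pairs among these $2^{n-1}-1$ colors are already realized. To gain one more color I would recolor the co-singleton $A^{(n)}\setminus\{a_1\}$ with a fresh color $c_{2^{n-1}}$; this destroys only the incidence between $c_{2^{n-1}}$ and the color now carried by the singleton $\{a_1\}$ alone, which I repair by additionally assigning that color to another singleton $\{a_j\}$, $j\ne 1$ (permissible because $\{a_1\}$ and $\{a_j\}$ are disjoint), so that the edge joining $\{a_j\}$ and $A^{(n)}\setminus\{a_1\}$ restores the missing pair. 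Finally I assign the universal vertex $w$ the last color $c_{2^{n-1}+1}$: as $w$ is adjacent to everything and all $2^{n-1}$ previous colors occur, every pair $(c_{2^{n-1}+1},c_j)$ is realized, and properness holds because $w$ forms its own color class. Checking that all $\binom{2^{n-1}+1}{2}$ pairs occur is routine, but must be done carefully around the low-degree singleton classes.

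For the upper bound I would first reduce to the graph $G_{A^{(n)}}-w$. Because $w$ is universal, in any proper coloring its color appears on no other vertex, so deleting $w$ removes exactly one color; moreover every pair of the surviving colors is realized by an edge avoiding $w$, since edges at $w$ only witness pairs involving $w$'s color. Hence any valid coloring of $G_{A^{(n)}}$ restricts to a valid coloring of $G_{A^{(n)}}-w$ with one color fewer, giving $\varphi(G_{A^{(n)}}) \le \varphi(G_{A^{(n)}}-w)+1$. It therefore suffices to show $\varphi(G_{A^{(n)}}-w) \le 2^{n-1}$. The hard part will be exactly this last inequality: the induced graph on the proper subsets has largest clique of order only $2^{n-1}-1$, yet for a coloring in the sense of Section~2 the number of colors may exceed the clique number plus one, and a plain edge count $\binom{k}{2}\le |E|$ is already inconclusive once $n\ge 4$ (it permits far more colors). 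I expect this step to require the finer structure of $G_{A^{(n)}}-w$ --- for instance an induction on $n$ together with the enumeration of maximum cliques from Proposition~\ref{Prop-KGSG} --- to force any color class that must meet all others to collapse the count back to $2^{n-1}$. Combining the two bounds then yields $\varphi(G_{A^{(n)}}) = 2^{n-1}+1$.
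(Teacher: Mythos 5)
Your lower-bound construction is essentially correct for $n\ge 3$, and it is a different (and more explicit) route than the paper's: the paper instead takes a largest clique $K_{2^{n-1}}$ that avoids all $n$ singleton vertices (Proposition~\ref{Prop-KGSG}), notes that the singletons are mutually non-adjacent with neighbourhoods covering that clique, and invokes the Chithra-graph result (Theorem~\ref{T-bCN-CG}) to obtain $2^{n-1}+1$ colors, whereas you color by complementary pairs and use the universal vertex $w=A^{(n)}$. Your repair around $A^{(n)}\setminus\{a_1\}$ does check out when $n\ge 3$. (For $n=2$ it degenerates: the only singleton other than $\{a_1\}$ \emph{is} $A^{(2)}\setminus\{a_1\}$, so no repair vertex exists --- and indeed $G_{A^{(2)}}\cong P_3$ has $\varphi=2$, so the statement itself fails at $n=2$; the paper's proof has the same boundary defect, since no largest clique of $G_{A^{(2)}}$ avoids both singletons.)

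The genuine gap is the upper bound, and you flag it yourself. Your reduction $\varphi(G_{A^{(n)}})\le\varphi(G_{A^{(n)}}-w)+1$ via the universal vertex is sound under the Section~2 definition, but the remaining inequality $\varphi(G_{A^{(n)}}-w)\le 2^{n-1}$ is exactly where all the work lies, and you offer only a conjectural strategy (induction plus the clique enumeration) rather than an argument; as you note, clique number and edge counts do not suffice. Without this step you have proved only $\varphi(G_{A^{(n)}})\ge 2^{n-1}+1$. For comparison, the paper does not close this gap either: its maximality argument consists of the assertion that every vertex outside the chosen clique-plus-singletons is adjacent to another such vertex, hence ``new colors cannot be added,'' followed by a cardinality remark --- this is not a proof that no valid coloring uses more than $2^{n-1}+1$ colors. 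So you have correctly isolated the one step that genuinely requires a structural argument about arbitrary colorings of the intersection graph of proper non-empty subsets; to complete the proof you must actually supply it.
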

\begin{proof}
By Proposition \ref{Prop-KGSG}, we know that a largest complete subgraph of the set-graph $G_{A^{(n)}}$, $n \ge 2$ is $K_{2^{n-1}}$. Furthermore, there exists such a complete graph, say $K^{\ast}_{2^{n-1}}$, such that the vertices $v_{1,i} \notin V(K^{\ast}_{2^{n-1}})$, $1\le i \le n$. Let the $b$-coloring of $K^{\ast}_{2^{n-1}}$ be the set $\C^{\ast} = \{ c_1,c_2,c_3,\ldots , c_{2^{n-1}}\}$. 

Since the vertices in the set $U =\{v_{1,i}: 1\le i \le n\}$ are mutually non-adjacent and $\bigcup^{n}_{i=1}N_G(v_{1,i}) = V(K^{\ast}_{2^{n-1}})$, the subgraph $K^{\ast}_{2^{n-1}} + U$ is a Chithra graph of $K^{\ast}_{2^{n-1}}$.  Hence, $\varphi(K^{\ast}_{2^{n-1}}+U)=2^{n-1}+1$. In the remaining subset of  vertices $V'=V(G_{A^{(n)}}) - V(K^{\ast}_{2^{n-1}} + U)$ each vertex $v_m\in V'$ is adjacent to at least one other vertex $v_{m'} \in V'$. The latter argument implies that new colors cannot be added and all vertices $v_m \in V'$ must be colored from $\C^{\ast}$, which is always possible because $|V'|=(2^n-1)-2^{n-1}-n<\frac{1}{2}(2^n-1)$. Hence, we have $\varphi(G_{A^{(n)}}) = 2^{n-1}+1$.
\end{proof}

\ni Another class of graphs similar to the set-graphs is the class of edge-set graphs which has been introduced in \cite{KSC} as follows. 

\begin{defn}{\rm 
\cite{KSC} Let $G(V,E)$ be a non-empty finite graph with $|E|= \epsilon$ and $\mathcal {E} = \mathcal {P}(E) - \{\emptyset\}$, where $\mathcal {P}(E)$ is the power set of the edge set $E(G)$. For $1 \le s \le \epsilon$, let $\mathcal{S}$ be the collection of all $s$-element subsets of $E(G)$ and $E_{s,i}$ be the $i$-th element of $\mathcal{S}$. Then the edge-set graph corresponding to $G$, denoted by $G_{E^{(\epsilon)}}$ or $\mathcal{G}_G$, is the graph with the following properties.

\begin{enumerate}\itemsep0mm
\item[(i)] $|V(\mathcal{G}_G)|=2^\epsilon-1$ so that there exists a one to one correspondence between $V(\mathcal{G}_G)$ and $\mathcal{E}$.

\item[(ii)] Two vertices, say $v_{s,i}$ and $v_{t,j}$ in $\mathcal{G}_G$ are adjacent if some elements (edges of $G$) in $E_{s,i}$ is adjacent to some elements of $E_{t,j}$ in $G$.
\end{enumerate}
}\end{defn}

Clearly, the edge-set graph is dependent on both the number of edges and the structure of $G$. It is known that $\varphi(G) \le \Delta(G)+1$  (see \cite{IM1,KM1}). In \cite{KSC} it is proved that the edge-set of $G$ on $n$ vertices is a complete graph if and only if $G$ is a star graph. Hence, it can be said that $\varphi(\mathcal{G}_G) \le \varphi(K_n)$.  Because the vertices of the edge-set graph is determined by the non-empty subsets of the number of edges of $G$ while the adjacency structure of $\mathcal{G}_G$ depends on the adjacency structure of $G$, the bound $\varphi(\mathcal{G}_G)\le n$ is optimal. For all non-star graphs, this bound improves to $\varphi(\mathcal{G}_G)\le n-1$.

\subsection{Edge-joint Graphs}

The notion of an edge-joint graph is simple. It is an operation between two graphs $G$ and $H$ and requires the addition of an arbitrary edge to join the graphs into a connected structure (see \cite{KS4}).

\begin{defn}{\rm 
\cite{KS4} The\textit{ edge-joint} of two graphs $G$ and $H$ is the graph obtained by adding the single arbitrary edge $vu$, $v \in V(G)$, $u \in V(H)$, and is denoted, $G\rightsquigarrow_{vu}H$. 
}\end{defn}

Clearly, the edge-joint operation establishes minimal connectivity between two graphs and results in the existence of at least one bridge in $G\rightsquigarrow_{vu}H$. For parameters which are reliant on the orientation of a graph such as determining the brush number of a graph it is seen that the commutative property does not necessarily hold. 

Certain structural properties are either enhanced or destroyed through this operation. By edge joint operations, connectivity in the graphs is enhanced where as the existence of Hamilton cycles is destroyed. Furthermore, if both graphs $G$ and $H$ are without bridges, all results for a graph with bridges can now be applied for the edge joint graph.

In view of these facts, the $b$-chromatic number of an edge joint graph is determined in the following theorem.

\begin{thm}
Let $\varphi(G)\ge \varphi(H)$ for two graphs $G$ and $H$, which are not paths with $2$ or $3$ vertices. Then, $\varphi(G\rightsquigarrow_{vu}H) = \varphi(H\rightsquigarrow_{uv}G) = \varphi(G)$.
\end{thm}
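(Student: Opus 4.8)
The plan is to show both inequalities: that $\varphi(G \rightsquigarrow_{vu} H) \ge \varphi(G)$ and that $\varphi(G \rightsquigarrow_{vu} H) \le \varphi(G)$, with the symmetric statement for $H \rightsquigarrow_{uv} G$ following by the same argument since the edge-joint operation is symmetric in its effect on the underlying graph. First I would establish the lower bound. Take an optimal $b$-coloring of $G$ using $\varphi(G)$ colors and an optimal $b$-coloring of $H$ using $\varphi(H) \le \varphi(G)$ colors, drawing both palettes from the same set of colors $\{c_1, \dots, c_{\varphi(G)}\}$. Since the only new edge is the bridge $vu$, the only way properness can fail is if $f(v) = f(u)$; if so, I would recolor $u$ (and propagate within $H$ if necessary, which is always possible because $H$ still admits its own $b$-coloring after a relabeling of colors). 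Each $b$-vertex of $G$ retains all of its neighbours within $G$, so every color class still has a representative adjacent to all other colors. Hence $\varphi(G \rightsquigarrow_{vu} H) \ge \varphi(G)$.

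For the upper bound, the key structural fact is that $vu$ is a bridge, so it lies in no cycle and its removal disconnects the graph into the components $G$ and $H$. I would argue that adding a single bridge cannot raise the $b$-chromatic number beyond $\varphi(G)$. Suppose for contradiction that $G \rightsquigarrow_{vu} H$ admits a $b$-coloring with $k = \varphi(G) + 1$ colors. A $b$-vertex for each color must have neighbours of every other color. Any vertex $w \ne v$ in $G$ has all its neighbours inside $G$, and similarly any vertex $w \ne u$ in $H$ has all its neighbours inside $H$. Only $v$ and $u$ gain a cross-component neighbour, and each gains exactly one. The main idea is that a set of $k$ mutually ``$b$-witnessed'' colors would force a large enough structure (effectively a $b$-coloring with $k$ colors) to live entirely within $G$ or within $H$, contradicting maximality of $\varphi(G)$ and $\varphi(H)$.

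The hard part will be making this upper-bound counting argument rigorous, because the single bridge edge does genuinely allow a tiny amount of ``sharing'' of colors across the two sides, and one must rule out that this extra adjacency creates a new $b$-vertex enabling an extra color. Concretely, I would analyze which color classes can have their $b$-vertex be $v$ or $u$: at most one color class on the $G$-side can rely on the bridge (via $v$'s single neighbour $u$), and at most one on the $H$-side (via $u$'s single neighbour $v$). I would show that all remaining $b$-vertices and their required rainbow neighbourhoods are confined to one component, so the number of colors simultaneously $b$-witnessed cannot exceed $\max(\varphi(G), \varphi(H)) = \varphi(G)$. The exclusion of $P_2$ and $P_3$ is needed precisely because Proposition \ref{Prop-1.2}(ii) shows these short paths are degenerate ($\varphi = 2$), and joining them can behave anomalously relative to the clean formula.

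Combining the two inequalities yields $\varphi(G \rightsquigarrow_{vu} H) = \varphi(G)$, and since reversing the roles of the endpoints produces the same underlying graph up to the labelling of the bridge endpoints, the identical bound gives $\varphi(H \rightsquigarrow_{uv} G) = \varphi(G)$ as well, completing the proof.
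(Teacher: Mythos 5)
Your lower bound $\varphi(G\rightsquigarrow_{vu}H)\ge\varphi(G)$ is sound: put both optimal $b$-colorings on the palette $\{c_1,\ldots,c_{\varphi(G)}\}$, permute the colors used on $H$ so that $u$ and $v$ receive different colors, and every $b$-vertex of $G$ keeps its rainbow neighbourhood. The gap is exactly where you flag it, and it is fatal: the upper bound $\varphi(G\rightsquigarrow_{vu}H)\le\varphi(G)$ cannot be made rigorous because it is false. Take $G=H=C_4$, so $\varphi(G)=\varphi(H)=2$ by Proposition~\ref{Prop-1.2}, and neither graph is $P_2$ or $P_3$. Write $G=v_1v_2v_3v_4$, $H=u_1u_2u_3u_4$, and join them by the edge $v_1u_1$. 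Color $v_1\mapsto c_1$, $v_2\mapsto c_2$, $v_3\mapsto c_1$, $v_4\mapsto c_3$, $u_1\mapsto c_2$, $u_2\mapsto c_3$, $u_3\mapsto c_1$, $u_4\mapsto c_3$. This coloring is proper, and $v_1$ (color $c_1$, neighbours colored $c_2,c_3,c_2$), $u_1$ (color $c_2$, neighbours colored $c_3,c_3,c_1$) and $u_2$ (color $c_3$, neighbours colored $c_2,c_1$) are $b$-vertices for the three classes, so $\varphi(C_4\rightsquigarrow_{v_1u_1}C_4)\ge 3>2$. Your heuristic that the rainbow neighbourhoods of the remaining classes must be ``confined to one component'' is precisely what breaks: different color classes may be witnessed by $b$-vertices on different sides of the bridge, with $v$ and $u$ each importing one color across it, and that small amount of sharing is already enough to create an extra color class.

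For comparison, the paper's own proof founders on the same point, only less explicitly: it asserts that coloring $u$ with a new color $c^{\ast}$ would yield a $b$-coloring of $H$ with $\varphi(H)+1$ colors, which is a non sequitur, since in the joined graph the $b$-vertices certifying the other colors need not lie in $H$ at all, and the $b$-vertex for $c^{\ast}$ may use the bridge. So neither your sketch nor the published argument establishes the upper bound; the theorem as stated requires either additional hypotheses or a corrected statement (for instance, an inequality $\varphi(G)\le\varphi(G\rightsquigarrow_{vu}H)\le\varphi(G)+1$ or similar), and the counterexample above shows equality with $\varphi(G)$ genuinely fails.
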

\begin{proof}
Assume that the $b$-colorings of two graphs $G$ and $H$ are respectively given by $\C =\{ c_1, c_2, c_3,\ldots , c_{\varphi(G)}\}$ and $\C' = \{c'_1, c'_2, c'_3,\ldots , c'_{\varphi(H)}\}$ and $c_i \ne c'_j, \forall i,j$. Since $|\C'| \le |\C|$, it is always possible to construct a one-on-one mapping $f(c'_i) \mapsto c_j \iff f^{-1}(c_j) = c'_i$.  Denote the mapping $F(\C') \mapsto \C$. Obviously, the vertex $u$ cannot be colored with a color $c^{\ast} \notin \C$ and $c^{\ast} \notin \C'$ else $\C' \cup \{c^{\ast}\}$ is a $b$-coloring of $H$ larger than the $b$-coloring $\C'$ which is a contradiction.

Since $|V(H)|\ge 2$, there exists a vertex $u'\ne u$, $u',u\in V(H)$. Clearly, vertex $u'$ cannot be colored with $c^{\ast} \notin \C$, else the $b$-coloring property is destroyed. Hence, the vertices of $H$ must be colored by the mapping $F(\C') \mapsto \C$. Thus, $\varphi(G\rightsquigarrow_{vu}H) = \varphi(H\rightsquigarrow_{uv}G) = \varphi(G)$. 
\end{proof}

\begin{rem}{\rm 
For  the graphs $G=P_n, ~n\le 3$ and $H=P_m,~m\le 3$, we can apply Proposition \ref{Prop-1.2} to find the $b$-chromatic number of the edge joint graph of the graphs $G$ and $H$.}
\end{rem}

\section{Conclusion}

In this paper, we have discussed the $b$-chromatic number of certain classes of graphs and digraphs. 

The concept of Chithra graphs allows another interesting observation namely, the smallest graph from which the complete graph $K_n,~n\ge 2$ can be constructed recursively is $K_1$. This is true because $K_n \in \mathfrak{C}(K_n),~K_n \in \mathfrak{C}(K_{n-1}), K_{n-1} \in \mathfrak{C}(K_{n-2}), K_{n-2} \in \mathfrak{C}(K_{n-3}),\ldots , K_2 \in \mathfrak{C}(K_1)$. In other words, $\mathfrak{C}(K_n) \subset \mathfrak{C}(K_{n-1}) \subset \mathfrak{C}(K_{n-2}) \subset \ldots \subset \mathfrak{C}(K_1)$. In this sense, $K_1$ is called the \textit{root Chithra graph} of $K_n$. 

For the sun graph $S_{2n}$, the root Chithra graph is $K_1$ as well. For the sunlet graph $S_n$ the cycle $C_n$ is the root Chithra graph. Note that the root Chithra graph may not be unique in general. For example, a root Chithra graph for the complete bi-partite graph $K_{n,m}$ may be $\mathcal{N}_n$ or $\mathcal{N}_m$.

Finally, for some graphs $G$, the root Chithra graph is $G$, itself, of which the Jaco graph $J^{\ast}_n(f(x))$ serves as an example. Hence, a root Chithra graph exists for all graphs because amongst the families of Chithra graphs of $G$, it is true that $G\in \mathfrak{C}(G)$ as well. It remains an open problem to determine the root Chithra graph of general classes of graphs.

With regards to edge-set graphs we have that for all non-star graphs $\varphi(\mathcal{G}_G)\le n-1$. This observation leaves a wide scope for research into the parameter $\varphi(\mathcal{G}_G)$ for specialised classes of graphs with well-defined adjacency structures such as paths, cycles and more complex graphs such as complete graphs and Jaco graphs.

\ni Some other the open problems, identified during our studies are the following.

\begin{prob}{\rm 
Determine the root Chithra graph of various other classes of graphs.}
\end{prob} 
 
\begin{prob}{\rm 
Determine $\varphi(\mathcal{G}_G)$ for special classes of graphs with well-defined adjacency structures.}
\end{prob}


\begin{thebibliography}{25}

\bibitem{BM1} J. A. Bondy and U. S. R. Murty, \textbf{Graph Theory with Applications,} Macmillan Press, London, 1976.

\bibitem{CL1} G. Chartrand and L. Lesniak, \textbf{Graphs and Digraphs}, CRC Press, 2000.

\bibitem{EK1} B. Effatin and H. Kheddouci, \textit{The b-chromatic number of some power graphs}, Discrete Mathematics and Theoretical Computer Science, \textbf{6}(2003),045-054.

\bibitem{GY1} J. T. Gross and J. Yellen, \textbf{Graph Theory and its Applications}, CRC Press, 2006.

\bibitem{IM1} R. W. Irving and D. F. Manlove, \textit{The b-chromatic number of a graph}, Discrete Applied Mathematics, \textbf{91}()(1999), 127-141.

\bibitem{KS4} J. Kok and N. Sudev , \textit{Certain Types of Total Irregularities of Graphs and Digraphs}, pre-print, arXiv: 1406.6863v3 [math; CO].

\bibitem{KS5}  J. Kok, N. Sudev, C. Sudev, \textit{On the Curling Number of Certain Graphs}, pre-print, arXiv:1506.00813v2, [math.CO].

\bibitem{KC2S} J. Kok, K.P. Chithra, N.K. Sudev, C. Susanth, \textit{A Study on Set-graph}, International Journal of Computer Applications, \textbf{118}(7)(2015), 1-6., DOI:10.5120/20754-3173. 

\bibitem{KC2Se} J. Kok, K.P. Chithra, N.K. Sudev, C. Susanth, \textit{A Corrigendum to the article titled: A Study on Set-graph}, International Journal of Computer Applications, \textbf{??}(?)(2015), 1 page., DOI: .

\bibitem{KSM} J. Kok, N.K. Sudev, V. Mukungunugwa, \textit{A Study on Ornated Graphs}, pre-print, arXiv:1403.7916v3, [math.CO].

\bibitem{KSK} J. Kok, C. Susanth, S. J. Kalayathankal, \textit{A Study on Linear Jaco Graphs}, pre-print, arXiv:1506.06538v1, [math.CO].

\bibitem{KSC} J. Kok, N.K. Sudev, K.P. Chithra, \textit{A Study on  Edge-Set Graphs of Certain Graphs}, pre-print, arXiv:1507.02546v1 [math.GM].

\bibitem{KM1} M. Kouider and M. Mah\'eo, \textit{Some bounds for the $b$-chromatic number of a graph}, Discrete Mathematics, \textbf(256)()(2002), 267-277.

\bibitem{VI1} S.K. Vaidya and R.V. Isaac, \textit{The b-chromatic number of some path related graphs}, International Journal of Mathematics and Scientific Computing, \textbf{4}(1)(2014), 7-12.

\bibitem{VV1} J.V. Vivin and M. Vekatachalam, \textit{On $b$-chromatic number of sun let graph and wheel graph families}, Journal of the Egyptian Mathematical Society, \textbf{23}(2)(2015), 215-222.
 
\bibitem{DBW} D. B. West, \textbf{Introduction to Graph Theory}, Pearson Education Incorporated, 2001.

\bibitem{GIC} \textit{Information System on Graph Classes and their Inclusions}, \url{http://www.graphclasses.org/smallgraphs}.

\end{thebibliography}
\end{document}